\newtheorem{theorem}{Theorem}
\newtheorem{lemma}[theorem]{Lemma}
\newtheorem{corollary}[theorem]{Corollary}
\newcommand{\R}{\mathbb{R}}
\newcommand{\N}{\mathbb{N}}
\newcommand{\e}{\varepsilon}
\newcommand{\To}{\rightarrow}
\newcommand{\dP}{\partial P}
\newcommand{\V}{\mathcal{V}}
\newcommand{\Z}{\mathbb{Z}}
\def\U{\mathcal{U}}
\def\S{\mathbb{S}^1}
\def\A{\bar{A}}
\def\B{\bar{B}}
\def\C{\bar{C}}
\def\D{\bar{D}}
\begin{document}
\title[Topologically weakly mixing  polygonal billiards]{Topologically weakly mixing   polygonal billiards}

\author{Jozef Bobok}

\author{Serge Troubetzkoy}

\address{Department of Mathematics of FCE\\Czech Technical University in Prague\\
Th\'akurova 7, 166 29 Prague 6, Czech Republic}
\email{jozef.bobok@cvut.cz}

\address{Aix Marseille Univ, CNRS, Centrale Marseille, I2M, Marseille, France}
\address{postal address: I2M, Luminy, Case 907, F-13288 Marseille Cedex 9, France}
\email{serge.troubetzkoy@univ-amu.fr}
\urladdr{http://iml.univ-mrs.fr/{\lower.7ex\hbox{\~{}}}troubetz/} \date{}

\thanks{We thank the A*MIDEX project (ANR-11-IDEX-0001-02), funded itself by the ``Investissements d'avenir'' program of the French Government, managed by the French National Research Agency (ANR).  We gratefully acknowledges the support of project  APEX ``Systèmes dynamiques: Probabilités et Approximation Diophantienne PAD'' funded by the Région PACA}

\begin{abstract}
We show that in a typical polygon the billiard map as well as its associated  subshift obtained by coding orbits by the sequence of sides
they visit are topologically weakly mixing.
\end{abstract}\maketitle

\section{Introduction}\label{s1}
A  point mass, moves inside a planar polygon
with unit speed along a straight line until it reaches the boundary, then instantaneously changes direction according to the mirror
law:  ``the angle of incidence is equal to the angle of reflection,'' and
continues along the new line. The first mathematical article about the billiard in a polygon dates from 1905 \cite{Le}.
Billiards in polygons come in two different types.  If the polygon is rational, i.e., all angles between sides are rational
multiple of $\pi$, the dynamics differs from the irrational case.
If one fixes the initial direction of the
billiard in a rational polygon, then  the directions taken by billiard orbit is a finite set, this plays the role of an integral of the motion; the invariant set
of points in the phase space taking these directions is in fact  a translation surface. In the rational polygon case much is known about the dynamics of the billiard flow restricted to
the associated  translation surface for a typical initial direction (i.e., a typical value of the integral) as well as about  the restriction of the  billiard map (the
first return map of the billiard flow to the boundary of the polygon) to the associated translation surface; we will refer to these restrictions as the directional billiard flow and the directional billiard map.
For all but countably many values of this integral the directional billiard map  is minimal \cite{Ke} (and thus the directional billiard flow
is minimal as well).
Kerchoff, Masur and Smillie showed that for almost every value of the integral the directional billiard flow is ergodic with respect to the invariant area measure, in fact uniquely ergodic \cite{KeMaSm} (and thus the directional billiard map is uniquely ergodic as well).
Katok showed that the
directional billiard map and flow are not mixing for any value of the integral \cite{Ka}, while Chaika and Fickenscher gave
examples of rational polygons for which the directional billiard map is topologically mixing for certain specific values of the integral \cite{ChFi}.
\'Avila and Forni has shown that generic interval exchange transformations are weakly mixing  and the straignt line flow generic translation surfaces is weakly mixing in almost every direction \cite{AvFo}. Unfortunately one can not deduce weakly mixing results for  for rational
polygonal billiards from this result since the set of translation surfaces arising from rational polygonal billiards is of measure 0
in the set of all translation surface.  None the less we do
have examples of rational polygons for which for some of the directional flows are weakly mixing. \'Avila and Delecroix
showed that for
almost every direction the directional billiard flow is weakly mixing in regular polygons  other than the lattice ones \cite{AvDe}
and M\'alaga Sabogal and Troubetzkoy showed the analogous  result  for typical polygons with vertical and horizontal sides \cite{MSTr}.
Boshernitzan et.\ al.\ have shown that in a rational polygon the set of periodic orbits of the billiard flow  are dense in the whole phase space \cite{BoGaKrTr}.

On the other hand much less is known about the billiard in an irrational polygon.  The only global result
is that the billiard map (and thus flow) is always of zero entropy, and thus never $K$-mixing \cite{Ka1,GaKrTr,GuHa} and every non-periodic orbit
accumlates at a vertex \cite{GaKrTr}.
Scheglov has shown that for almost every triangle we have a stronger result, certain geometric quantities grow slower than a
stretched exponential \cite{Sc}.
Katok and Zemlyakov developed a
method to prove results about the dynamics of  typical polygons, here and throughout the article the word typical
means that the property holds for a dense $G_\delta$ set of polygons equipped by an appropriate topology. They developed this technique to  show that the billiard in a typical polygon is topologically transitive \cite{KaZe}.  Using this technique and the Kerchoff-Masur-Smillie result mentioned above, one can show
that he billiard in a typical polygon is  ergodic  with respect to the phase volume \cite{KeMaSm, Ka2, PoSt}.
Finally Troubetzkoy showed that the billiard map in a typical polygon is totally ergodic \cite{Tr}.

In this article we  use the approximation technique to give the first known result on higher mixing for  the
billiard in irrational polygons. We show

\begin{theorem}\label{t1}
For each $k \ge 3$ there
is a dense $G_\delta$ set $G$ of $k$-gons such that if $P \in G$ the  billiard map as well as the billiard shift are topologically weakly mixing.
\end{theorem}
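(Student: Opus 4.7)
The plan is to use the Katok--Zemlyakov approximation technique adapted to the product dynamics. Topological weak mixing of a map $T$ is equivalent to topological transitivity of the product $T\times T$ on the product phase space. I would work in the Baire space $\mathcal{P}_k$ of labelled $k$-gons with the vertex-coordinate topology. Fix a countable basis $\{W_i\}$ of open sets in the billiard phase space and, analogously, a countable basis of cylinder sets for the shift space. For each tuple $(i_1,j_1,i_2,j_2,N)$ define
\[
A(i_1,j_1,i_2,j_2,N)\;=\;\bigcup_{n\ge N}\bigl\{P\in\mathcal{P}_k:\; T_P^n(W_{i_1})\cap W_{j_1}\ne\emptyset\ \text{and}\ T_P^n(W_{i_2})\cap W_{j_2}\ne\emptyset\bigr\}.
\]
For each fixed $n$ the condition inside the union is open in $\mathcal{P}_k$, since finite iterates of the billiard map depend continuously on the polygon away from the singular set; hence each $A$ is open. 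The set of polygons for which $T_P\times T_P$ is topologically transitive is the countable intersection of all such $A$, and is therefore $G_\delta$. Intersecting with the analogous family for cylinder sets of the shift yields the desired $G_\delta$ set $G$. It remains to show that each $A$ is dense.

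\textbf{Density via a well-chosen rational approximation.} To prove density, fix $P_0\in\mathcal{P}_k$, a radius $\e>0$, and the four basic open sets. I would approximate $P_0$ by a rational $k$-gon $P_1$ within $\e/2$ belonging to a subclass in which the directional billiard flow is weakly mixing in almost every direction, obtained by combining the Kerchoff--Masur--Smillie unique ergodicity result with a weak-mixing ingredient in the spirit of \'Avila--Delecroix and M\'alaga Sabogal--Troubetzkoy. Since each $W_{i_\ell},W_{j_\ell}$ is open, each of the two pairs $(W_{i_1},W_{j_1})$ and $(W_{i_2},W_{j_2})$ meets a positive-measure set of directions. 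I would choose weakly mixing directions $\theta_1\ne\theta_2$ lying in distinct orbits under the reflection group of $P_1$ such that the $\theta_\ell$-invariant surface meets both $W_{i_\ell}$ and $W_{j_\ell}$. Because topological weak mixing of one factor against topological minimality of the other suffices for topological transitivity of the product, the product flow $T_{P_1,\theta_1}\times T_{P_1,\theta_2}$ is topologically transitive, producing an integer $n\ge N$ and points $x_\ell\in W_{i_\ell}$ with $T_{P_1}^n x_\ell\in W_{j_\ell}$.

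\textbf{Perturbation and main obstacle.} A final slight perturbation then produces a polygon $P$ within $\e$ of $P_0$ in which the two orbit segments of combinatorial length $n$ persist: the map $P\mapsto T_P^n$ is locally constant in combinatorial type and depends continuously on $P$ away from vertex orbits, so for a sufficiently small perturbation the points $x_\ell$ remain in $W_{i_\ell}$ and their $n$-th iterates remain in the open sets $W_{j_\ell}$, giving $P\in A(i_1,j_1,i_2,j_2,N)$. The parallel argument for the billiard shift runs through the natural coding, which lifts cylinder sets to open subsets of phase space. The hardest step is clearly the choice of $P_1$: one needs a subclass of rational polygons that is simultaneously (i) dense in $\mathcal{P}_k$ and (ii) exhibits weak mixing of the directional flow in almost every direction. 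The known weak-mixing results give (ii) only for special non-dense families (regular polygons, polygons with vertical and horizontal sides), so the main technical ingredient to be supplied is either a sharper density/approximation lemma for such rational polygons, or a direct construction of pairs of orbit segments satisfying the required synchronization without invoking full weak mixing of the approximating directional flows.
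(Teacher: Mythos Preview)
Your Baire framework is correct and matches the paper's setup: the $G_\delta$ structure comes from exactly the countable family of open conditions you describe, and openness follows from continuity of finite iterates. The genuine gap is in the density step, and you have correctly located it yourself.

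You propose to approximate by a rational polygon whose directional billiard map is weakly mixing in almost every direction, and you note that the known results (\'Avila--Delecroix, M\'alaga Sabogal--Troubetzkoy) only cover non-dense families. This is not a technicality to be patched: the paper explicitly remarks that it is \emph{not known} whether the directional billiard map in a rational polygon is topologically weakly mixing, so this ingredient is simply unavailable. The whole point of the paper is that one can get by without it.

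The mechanism the paper uses in its place is the one you allude to in your final sentence but do not carry out: a direct synchronization argument in the style of Banks and Petersen, using only (a) total transitivity of the directional map on $\mathcal{R}_\xi$ (which follows from Keane: an IET without saddle connections is totally minimal), and (b) density of periodic points (Boshernitzan--Galperin--Kr\"uger--Troubetzkoy). Concretely: pick a rational $P$ with $N_P$ large enough that every $\mathcal{R}_\xi$ meets each basic open set. Use transitivity on some $\mathcal{R}_\xi$ to find $\ell$ with $f^\ell A\cap C\ne\emptyset$, then pick a periodic point $x$ of period $m$ inside $A\cap f^{-\ell}C$. Now $f^{jm+\ell}x\in C$ for \emph{every} $j\ge 1$, so the first hitting condition holds along the entire arithmetic progression $\{jm+\ell\}$. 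Finally, total transitivity gives transitivity of $f^m$ on a suitable $\mathcal{R}_\xi$, so some $j$ also satisfies $f^{jm+\ell}B\cap D\ne\emptyset$. This produces the common time $n=jm+\ell$ without any weak-mixing hypothesis on the approximating rational billiard.

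In short, your outline is right up to the density step; the missing idea is to replace ``weak mixing of the directional map'' by ``total transitivity plus dense periodic points,'' and to run the Banks--Petersen synchronization by hand.
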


All the notions in the theorem will be made precise in Section \ref{s3}.
 Besides the method of approximation the proof uses only two ingredients, the fact that the directional billiard map in most directions is
 minimal due to Keane \cite{Ke} from which one can easily deduce that it is totally transitive (totally minimal), and the fact that the billiard flow (and thus the billiard map) has a dense set of periodic orbits \cite{BoGaKrTr}.  Somewhat surprisingly, in contrast to the three previously mentioned results using the approximation technique, we do not know that the target property of topological weak mixing holds for directional billiard maps
 in rational polygons.

 The structure of the article is as follows, in Section \ref{s3} we will  state all the precise definitions and  give some background material necessary to prove Theorem
 \ref{t1}. Finally in Section \ref{s4} we will first state  a more general version of
 Theorem \ref{t1} and we prove this theorem.

\section{Definitions and background material}\label{s3}

\subsection{Topological dynamics}\label{pete}
A pair $(X, f)$, where $X$ is a compact Hausdorff topological space and $f : X \to X$ is
a homeomorphism is called a  \emph{topological dynamical system}.
A topological dynamical system $(X,f)$ is called \emph{topologically transitive} if the forward orbit $O(x) = \{f^ix: i \in \N\}$ of some point
$x \in X$ is dense in $X$. A necessary and sufficient condition for $(X,f)$ to be topologically transitive is that for each pair of open sets $U,V$
there exists an $n \in \N$ such that $f^n U \cap V \ne \emptyset$.
A topological dynamical system is called \emph{topologically weakly mixing} if the topological dynamical system $(X \times X, f \times f)$
is topologically transitive, i.e., the map $f \times f$ has a dense
orbit in $X \times X$.\footnote{There is also a different definition of topological weak mixing via continuous eigenfunction,  see \cite{KeRo}
for the relationship between these notions.}
The above mentioned characterization of topological transitivity shows that a topological dynamics system is  topologically weakly mixing if and only if for any four open sets $A,B,C,D \subset X$ there
exists a $n \in \N$ such that $(f \times f)^n(A \times B) \cap (C \times D) \ne \emptyset$, we will refer to this
characterization by (TM). Using this characterization
Petersen has shown that a topological dynamical system is  topologically weakly mixing if and only if given nonempty open sets $U$ and $V$ of $X$
there exists an $n \in \N$ such that $f^n U \cap U \ne \emptyset$ and $f^n U \cap V \ne \emptyset$ \cite{Pe}.
A topological dynamical system $(X,f)$ is called \emph{totally transitive} if $(X,f^n)$ is topologically transitive for all $n \ge 1$
and is called \emph{totally minimal} if $(X,f^n)$ is minimal for all $n \ge 1$.

\subsection{Definition of the billiard}
Details on polygonal billiards  can be found in the nice  survey article \cite{MaTa}.
Let $P \subset \R^2$ be a polygon.
A  point mass, moves inside  $P$
with unit speed along a straight line until it reaches the boundary
$\partial P$, then instantaneously changes direction according to the mirror
law:  ``the angle of incidence is equal to the angle of reflection,'' and
continues along the new line. If the trajectory hits a corner of the polygon,
in general it does not have a unique continuation and thus we decide that
it stops there.
The {\it phase space} of the \emph{billiard flow} $\phi_t$ is the quotient
\begin{multline*}
Y_P :=\big  \{ (x,\xi) \in P \times \S: \xi \text{ points into } P \text{ and }\\ x \text{ belongs to the interior of an edge } \big \} / \sim,
\end{multline*}
where $\sim$ identifies $(x,\xi)$ with  $(x,\delta \xi)$ whenever $x$ lies in the interior of an edge and $\delta$ is the reflection in that edge.

Let $C_P$ denote the \emph{set of corners} of $P$ and
$$X_P := Y_P \, \cap \, ((\dP \setminus C_P) \times \S).$$ The {\it billiard map} $f: X_P \To X_P$ is the first return of the flow $\phi_t$
to the set $X_P$ (i.e., the boundary $\dP$ of $P$);  for convenience we have removed the corners from the phase space.
We have $X_P = \cup_{i=1}^k    X_i$
where $X_i$ is an open rectangle whose  length is the length of the $i$th side of $P$, and whose  height is
$\pi$  since at each (non-corner) point there is a half circle of directions pointing inside $P$.
 We will denote points of $X_P$ by $u = (x,\theta)$ where the
inner
pointing directions $\theta\in\Theta = (-\frac{\pi}{2},\frac{\pi}{2})$ is measured with respect to the inner pointing normal.

\subsection{Polygonal topological dynamics}
The billiard flow and map are not continuous, they are not even defined everywhere. None the less
we extend the various  definitions from topological dynamics to the billiard map.
The billiard map
 is called \emph{topologically transitive} if there exists a point whose  forward orbit is dense.
The billiard map is called \emph{topologically weakly mixing} if the cartesian product of the billiard map
with itself is topologically transitive in the sense that it has a dense orbit.
We will use the characterizations of the previous section, but with care to make sure
that they hold in the polygonal setting.

\subsection{Interval exchange transformations}

Let $\beta_0 = 0 < \beta_1 \cdots < \beta_n = 1$.
An interval exchange transformation is a bijective map $f$ of the interval $[0,1)$ such that $f$ restricted to each of the intervals
$[\beta_i,\beta_{i+1})$ is a translation. The interval exchange $([0,1),f)$ has a \emph{saddle connection} if there is an forward orbit
segment
which starts at some $\beta_i$ and ends at some $\beta_j$ (possibly the same) or if there is a forward
 orbit segment of $\lim_{x \to \beta_i^-} fx$
which ends at some $\beta_j$.

We call the interval exchange $([0,1),f)$ \emph{minimal} if all forward orbits are dense.  We call it  \emph{totally minimal} if $([0,1),f^k)$ is minimal for all $k \ge 1$. There is a well known structure of interval exchange transformations due to Keane \cite{Ke}, who showed
that if
an interval exchange transformation
has no saddle connections then it is minimal. Consider an IET $([0,1),f)$ without saddle connections. The inverse map $([0,1),f^{-1})$
is also an IET without saddle connections.  Now consider the IETs
 $([0,1),f^k)$   where
$k  \in \Z \setminus \{0\}$.  Notice that if $f^k$  has a saddle connection, then $f$ has a saddle connection. Thus we can conclude.

\begin{lemma}\label{l2}
Any IET $([0,1),f)$ without saddle connections is totally minimal.
\end{lemma}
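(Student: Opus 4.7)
The plan is to reduce Lemma \ref{l2} to Keane's theorem applied iterate by iterate. Concretely, I will show that if $f$ has no saddle connections then neither does the IET $f^k$ for every $k\ge 1$; Keane then gives that each $f^k$ is minimal, which is precisely total minimality of $f$.

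First I will verify that $f^k$ is itself an IET on $[0,1)$. Its discontinuities are contained in the finite set $D_k := \bigcup_{j=0}^{k-1} f^{-j}(\{\beta_1,\dots,\beta_{n-1}\}) \cup \{0\}$, and on each maximal open interval of $[0,1) \setminus D_k$ the map $f^k$ is a composition of $k$ translations, hence itself a translation.

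Next I will argue the contrapositive: any saddle connection of $f^k$ forces one of $f$. In the first case of the definition, suppose $(f^k)^m(\gamma_1) = \gamma_2$ with $m \ge 1$ and $\gamma_1,\gamma_2$ interior break points of $f^k$. Write $\gamma_1 = f^{-a}(\beta_i)$ and $\gamma_2 = f^{-b}(\beta_j)$ with $0 \le a,b \le k-1$ and $1 \le i,j \le n-1$. Then $f^{km-a+b}(\beta_i) = \beta_j$, and since $km \ge k$ while $b-a \ge -(k-1)$, the exponent $km-a+b \ge 1$, exhibiting a saddle connection of $f$. In the second case, when the forward $f^k$-orbit of some left-limit $\lim_{x\to\gamma_1^-} f^k(x)$ reaches an interior break point of $f^k$, I choose $a$ minimal with $\gamma_1 = f^{-a}(\beta_i)$, so that $f^j$ is continuous at $\gamma_1$ for $j<a$, and unwind the left-limit coordinate by coordinate. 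Using the absence of $f$-saddle connections to rule out intermediate hits on the $\beta_\ell$'s, this yields $\lim_{x\to\gamma_1^-} f^k(x) = f^{k-a-1}(\alpha_i)$ where $\alpha_i := \lim_{y\to\beta_i^-} f(y)$; the same bookkeeping as in the first case then produces a positive $s$ with $f^s(\alpha_i)$ a break point of $f$, i.e., an $f$-saddle connection of the second kind.

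Granting these two steps, Keane's theorem applies to each $f^k$ and yields minimality, so $f$ is totally minimal. I expect the only real obstacle to be the bookkeeping in the left-limit case: one must be careful to take the minimal $a$ in the representation $\gamma_1 = f^{-a}(\beta_i)$ and to invoke the no-saddle-connection hypothesis for $f$ to propagate the left-limit through the remaining $k-a-1$ iterations without accidentally landing on another $\beta_\ell$. With that in hand, the rest is pure exponent counting.
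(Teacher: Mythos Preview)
Your proposal is correct and follows exactly the route the paper takes: the paper's entire argument (given in the paragraph immediately preceding the lemma) is the one-line observation that a saddle connection for $f^k$ forces one for $f$, whence Keane applies to every iterate. You have simply unpacked the bookkeeping that the paper leaves implicit, including the more delicate left-limit case.
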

We remark that the inverse  $f^{-1}$ of an IET $f$ without saddle connections is also an IET without saddle connections
and thus is also totally minimal.

\subsection{Rational polygons}\label{ss1}
By $D$ we denote
the group generated by the reflections in the lines through the
origin, parallel to the sides of the polygon $P$. The group $D$ is
either

\begin{enumerate}[(i)]
\item{} finite, when  all the angles between sides of $P$ are of the form $\pi
m_i/n_i$ with distinct co-prime integers $m_i$, $n_i$,  in this
case $D=D_N$ the dihedral group generated by the reflections in
lines through the origin that meet at angles $\pi/N$, where $N := N_P$
is the least common multiple of $n_i$'s, or
\item{} countably infinite, when at least one angle
between sides of $P$ is an irrational multiple of $\pi$.
\end{enumerate}
In the two cases we will refer to the polygon as
{\it rational}, respectively {\it irrational}.

For $\xi\in \S \setminus \{ k\pi/N_P : k =0,1,\dots,2N_P-1\}$, let $\mathcal{S}_{\xi}$ be the set of points in $Y_P$ whose second coordinate belongs to the orbit of $\xi$ under $D$.
Since a trajectory changes its direction by an element of $D$ under
each reflection, $\mathcal{S}_{\xi}$ is an invariant set of the billiard
flow $\phi_t$ in $P$.
In fact if $P$ is a rational polygon then the set $\mathcal{S}_{\xi}$ is a compact surface
with conical singularities and a flat metric away from the singularities, i.e., a  translation
surface (\cite{MaTa} Section 1.5). Let $\mathcal{R}_\xi := \mathcal{S}_\xi \cap X_P$, we will also sometimes refer to $\mathcal{R}_\xi$ as $\mathcal{R}_{\theta}$
if  the angle is measured with respect to the normal. Throughout the article we will assume that are angles are not of the form $k\pi/N$ often without explicitly mentioning it.

A {\em saddle connection} is a  segment of a billiard orbit starting in a corner of $P$ and ending in
a corner of $P$.
 A direction is
{\it exceptional} if it is the direction of a saddle connection. There are countably many
saddle connections, hence the are countably many exceptional directions. A
direction which is not
exceptional will be called {\it non-exceptional}.

We recall  some well known results about rational polygonal billiards.
\begin{enumerate}[(i)]\addtocounter{enumi}{2}
\item{}  if $\theta$ is non-exceptional, then the directional billiard map on $\mathcal{R}_\theta$ can be extended by one sided-continuity to a minimal IET without  saddle connections (\cite{MaTa} p.\ 1027), and
\item{}  periodic points of the billiard flow are dense in the phase space \cite{BoGaKrTr}.
\end{enumerate}

Remark: the statement (iii) is a bit  awkward  since we have different conventions on the behavior at singular points, for IETs
the map is defined and continuous from the right at singular points, while for billiards the map is not defined when leaving or arriving
at a corner.
These two facts imply
\begin{corollary}\label{c1}
For any rational polygon
\begin{enumerate}[(i)]\addtocounter{enumi}{4}
\item{} if $\theta$ is non-exceptional, then the billiard map on $\mathcal{R}_{\theta}$ is totally transitive (even totally minimal), and\label{a}
\item{} periodic points of the billiard map are dense in the phase space.\label{c}
\end{enumerate}
\end{corollary}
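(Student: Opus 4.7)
The plan is to deduce (v) directly from fact (iii) via Lemma \ref{l2}, and to deduce (vi) from fact (iv) by thickening open sets in the billiard map phase space to open sets in the flow phase space.

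For (v), fix a non-exceptional $\theta$. By (iii), the directional billiard map on $\mathcal{R}_\theta$ extends by one-sided continuity to an IET without saddle connections on a finite union of half-open intervals naturally identified with $\mathcal{R}_\theta$. Apply Lemma \ref{l2} to this extended IET, call it $g$: every iterate $g^k$ is minimal, so every forward $g^k$-orbit is dense. Now the billiard map $f$ on $\mathcal{R}_\theta$ agrees with $g$ on the complement of the countable set $E$ of points whose forward orbit eventually meets a corner of $P$. For any $x \in \mathcal{R}_\theta \setminus E$ and any $k \ge 1$, the forward $f^k$-orbit of $x$ coincides with its forward $g^k$-orbit and is therefore dense in $\mathcal{R}_\theta$ (density in $\mathcal{R}_\theta\setminus E$ implies density in $\mathcal{R}_\theta$ because $\mathcal{R}_\theta$ has no isolated points). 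This gives total minimality (hence a fortiori total transitivity) of $f$ on $\mathcal{R}_\theta$ in the appropriate sense for maps undefined on a countable set.

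For (vi), let $U \subset X_P$ be nonempty open. Pick $(x,\theta) \in U$ with $x$ in the relative interior of an edge and $\theta \in \Theta$ pointing strictly into $P$. For sufficiently small $t > 0$, the flow image $\phi_t(x,\theta)$ lies in the interior of $P$, and by continuity of $\phi_t$ one can find an open neighborhood $V$ of $\phi_t(x,\theta)$ in $Y_P$ such that $\phi_{-t}(V) \subset U$. By (iv) there is a periodic flow orbit passing through some point of $V$; such an orbit necessarily returns to $\partial P$ (a nontrivial periodic billiard flow orbit cannot lie in the interior of $P$), hence gives rise to a periodic point of the billiard map $f$. Flowing this point backward by $t$ lands it in $U$ along a periodic $f$-orbit, so periodic points of $f$ are dense in $X_P$.

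The only subtle point is the bookkeeping in (v): the directional map $f$ is not defined on a countable set, so phrasings like ``every orbit is dense'' or ``totally minimal'' must be understood as restricted to points with well-defined infinite forward orbit. Once one agrees on this convention, the deduction is immediate from Lemma \ref{l2}, and nothing else beyond facts (iii) and (iv) is used.
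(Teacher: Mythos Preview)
Your approach matches the paper's: deduce (v) from (iii) via Lemma~\ref{l2}, and deduce (vi) from (iv) by passing between the map phase space $X_P$ and the flow phase space $Y_P$. Part (v) is correct and is exactly what the paper does (with your extra bookkeeping about the countable undefined set being a welcome clarification).

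In part (vi) there is a technical slip. You claim that by continuity one can choose an open neighborhood $V \subset Y_P$ of $\phi_t(x,\theta)$ with $\phi_{-t}(V) \subset U$. But $U$ lies in $X_P$, which is a codimension-one subset of $Y_P$, while $\phi_{-t}$ is (locally) a homeomorphism of $Y_P$; hence $\phi_{-t}(V)$ is open in $Y_P$ and cannot be contained in $X_P$, let alone in $U$. What continuity actually gives you is that the \emph{first backward boundary hit} from a point near $\phi_t(x,\theta)$ is close to $(x,\theta)$, but the hitting time varies with the point, so fixing the single time $t$ does not work. The paper handles this cleanly by thickening $U$ forward in time: set $U(\varepsilon) := \bigcup_{t \in (0,\varepsilon)} \phi_t(U)$, which for small $\varepsilon$ is a genuine open subset of $Y_P$; apply (iv) to obtain a flow-periodic point $\bar x \in U(\varepsilon)$; then $\bar x = \phi_s x$ for some $x \in U$ and some $s \in (0,\varepsilon)$, and this $x$ is the required $f$-periodic point in $U$. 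Your argument is easily repaired along these lines, but as written the inclusion $\phi_{-t}(V) \subset U$ is false.
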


\begin{proof}
Combining Lemma \ref{l2} and (iii) shows that in any non-exceptional direction,
any forward infinite orbit of any power of the  billiard map is dense in $\mathcal{R}_\theta$, proving (\ref{a}).

To see (\ref{c}) take a small open neighborhood $U$ in the phase space $X_P$ of the billiard map satisfying that the closure
$\overline{U}$  (in $\overline{X_P}$) does not contain a vertex or a vector tangent to a side.   Fix $\e > 0$ small and consider
 $U(\e) := \cup_{t \in (0,\e)} \phi_t(U)$.  For small enough $\e > 0$ this is an open neighborhood of the phase space of the billiard flow,
 thus there is a point $\bar{x} \in U(\e)$ which is periodic under the billiard flow.  There is a point $x \in U$ such that
 $\bar{x} = \phi_t x$ for some $t \in (0,\e)$.  So $x$ is a periodic point of the billiard map.
 \end{proof}

\subsection{Billiard shifts} Suppose that $P$ has $k$ sides, label each side of $P$ by a different letter from the alphabet
$\{1,2,\dots, k\}$.
Let $X_P'$ denote the set of  $u = (x,\theta) \in X_P$ for which $f^iu$ is defined for all $i \in \Z$.
The \emph{code of} $u$, denoted by $c(u)$, is  the sequence of labels of the sides visited by the orbit of $u$.
Let
$$\Sigma'_P: = \{c(u) : u \in X_P'\}.$$  $\Sigma'_P$  is  a subset of $ \{1,2\dots,k\}^{\Z}$ endowed with the product topology.
Let $\Sigma_P$ denote the closure of $\Sigma'_P$. A cylinder set is a set of the form
$\{z \in \Sigma_P: z_i = a_0, \dots, z_{i+k} = a_k\}$, cylinder sets form a basis of the topology.
We consider the left shift operator $\sigma$ on $\Sigma_P$.
We will refer to the topological dynamical system $(\Sigma_P,\sigma)$ as \emph{the billiard
shift}.

By definition  we have the following commutative relationship

\begin{equation} \begin{tikzcd}\label{e1}
X'_P \arrow{r}{f} \arrow[swap]{d}{c} & X'_P \arrow{d}{c} \\
\Sigma'_P \arrow{r}{\sigma}& \Sigma'_P
\end{tikzcd}
\end{equation}

Although we will not use it here, it seems appropriate to point out that
the map $c$  invertible up to periodic points.
 More precisely, for
each $c \in \Sigma_P'$ le $\pi(c)$ denote the set of $u \in X_P'$ such that the code $c(u)$ equals $u$.
Extend the set valued map $\pi$ to a set valued map from  $\Sigma_P \to \overline{X_P'} \subset \overline{X_P}$ by continuity (we will refer to this
extension by the same symbol $\pi$).
Galperin et. al.\ \cite{GaKrTr} have shown that

\begin{enumerate}[(i)]\addtocounter{enumi}{6}

  \item if $c  \in \Sigma_P$ is periodic with period $n$, then $\pi(c)$ is a horizontal interval in $\overline{X_P}$; when $n$ is even
  each point from $\pi(c)$ is a periodic point  period $n$,  while if $n$ is odd then the midpoint of the interval $\pi(c)$  has period $n$ while
  all  other points from
  $\pi(c)$ have period $2n$, and
\item if $c \in \Sigma_P$ is non-periodic then the set $\pi(c)$ consists of one point.
\end{enumerate}

\subsection{Topologies on the space of $k$-gons}

For simplicity suppose that the polygon is simply connected (our results hold in the finitely connected case
as well). To state our theorem, we need a topology on the set of all
$k$-gons (for $k \ge 3$ fixed). We will consider several such topologies, our results hold for all of the topologies.

The first topology is the simplest. A $k$-gon is determined by its corners, thus we can think of the $k$-gons as an open subset
of $\R^{2k}$. The topology of $\R^{2k}$ then induces a topology on the set of $k$-gons.
The space of all $k$-gons is thus a Baire space.
In this topology there is no attempt to represent a polygon in a unique way.

Notice that
the dynamics of the billiard in similar polygons is conjugate by the similarity.
We introduce second topology which represents the dynamical system of a billiard in a  polygon taking this into account.
Fix a corner of the polygon and enumerate the sides in a cyclic way starting at this corner (say in a clockwise fashion).
Place this corner at the origin and consider the similarity which takes the first side to the interval connecting the two points $(0,0)$ $(1,0)$.
Then the side-lengths
$\ell_1=1, \ell_2, \dots, \ell_k$ and the angles $\alpha_1, \dots, \alpha_k$  where $\alpha_i$ is the angle
 between sides $i$ and $i+1$ determine the polygon.
The angle $\alpha_1$ is determined by the other angles, and the lengths $\ell_2$ and $\ell_3$
are determined by the remaining angles and lengths.
Thus the set of $k$-gons are parametrized by $\alpha_2, \dots, \alpha_k, \ell_4,\dots,\ell_k$, i.e.,
 an open subset of $\R^{2k-4}$ (for triangles there are no length parameters).
 This representation is more unique than the first one.  Each billiard in a polygon is represented $2k$-times, namely there are
 two representations for each corner of the polygon (one for each orientation).
 Again the space of all $k$-gons is  a Baire space.

 Finally we can make the representation unique, but only for a subset of $k$-gons: those with a unique longest side, such that the
two sides of $P$ adjacent to this side have different lengths (call them \emph{generic}). In the above setting we
consider the similarity which takes this longest side to the interval connecting the two points $(0,0)$ $(1,0)$
such that the longer of the two adjacent sides starts at $(1,0)$ and points upwards, thus the set of generic $k$-gons is  an open subset of $\R^{2k-4}$. The non-generic $k$-gons are on the boundary of this set, but they do not have a unique representation.
Again the space of all generic $k$-gons is a Baire space.

\section{Proof of Theorem \ref{t1}}\label{s4}
Without any extra work we  will shown the following generalization of Theorem \ref{t1}.

\begin{theorem}\label{t4} Fix $k \ge 3$ and suppose that $Z$ is a Baire set of $k$-gons such that  for each $n \in \mathbb{N}$
the set $$\{P \in Z: P \text{ is a rational polygon such that } N_P \ge n\}$$  is dense in $Z$.
Then there is a dense $G_\delta$ subset $G$ of $Z$ for such that if $P \in G$ the corresponding billiard map and  billiard shift are  topologically weakly mixing.
\end{theorem}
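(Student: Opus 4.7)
\emph{Strategy.} I would prove Theorem \ref{t4} by a Baire category argument that verifies Petersen's criterion for the billiard map against every pair from a countable basis of basic open sets. Let $\mathcal{B}$ be the countable basis of ``rectangular'' open subsets of the phase space obtained by taking products of an open position interval (with rational endpoints on a specified edge) and an open direction interval (with rational endpoints); each $U \in \mathcal{B}$ also determines an open subset of $X_P$ for every $P$ in a sufficiently small neighborhood of any reference polygon, via the natural parametrization of the edges. For each pair $(U,V) \in \mathcal{B}^2$ and each $n \ge 1$, define
\[
W(U,V;n) := \{P \in Z : f_P^n U \cap U \ne \emptyset \text{ and } f_P^n U \cap V \ne \emptyset\}, \qquad W(U,V) := \bigcup_{n \ge 1} W(U,V;n).
\]
Each $W(U,V;n)$ is open: given $P_0 \in W(U,V;n)$, witnessing points $x_1 \in U \cap f_{P_0}^{-n} U$ and $x_2 \in U \cap f_{P_0}^{-n} V$ persist under small perturbations of $P_0$ because the billiard map depends continuously on both the polygon parameters and the initial condition along any finite orbit segment avoiding the corners. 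Put $G := \bigcap_{(U,V) \in \mathcal{B}^2} W(U,V)$. Once each $W(U,V)$ is shown dense, $G$ is the desired dense $G_\delta$, and every $P \in G$ has topologically weakly mixing billiard map by Petersen's characterization.

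\emph{Density of $W(U,V)$.} It suffices to exhibit every rational polygon $P \in Z$ with $N_P$ sufficiently large (depending on $U,V$) as an element of $W(U,V)$, since by hypothesis such rationals are dense in $Z$. Given such $P$, Corollary \ref{c1}(\ref{c}) yields a periodic point $a \in U$ of some period $m$ for $f_P$. Taking $N_P$ large I can then select a global direction $\xi$ that is non-exceptional in $P$ \emph{and} such that $D \cdot \xi$ meets both the direction range of $U$ and that of $V$: the orbit $D_{N_P}\cdot\xi$ is $\pi/N_P$-spaced on $\mathbb{S}^1$, which for large $N_P$ forces intersection with any prescribed nonempty open arc of directions. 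Pick $u \in U \cap \mathcal{R}_\xi$ and observe that $V \cap \mathcal{R}_\xi$ is a nonempty open subset of $\mathcal{R}_\xi$. By Corollary \ref{c1}(\ref{a}), $f_P|_{\mathcal{R}_\xi}$ is totally minimal, so $f_P^m$ is also minimal on $\mathcal{R}_\xi$; hence the $f_P^m$-orbit of $u$ is dense in $\mathcal{R}_\xi$ and visits $V$ at some time $n = km$. Then $f_P^n(a) = a \in U$ and $f_P^n(u) \in V$, so $P \in W(U,V;n) \subset W(U,V)$.

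\emph{Billiard shift and main obstacle.} Weak mixing of the billiard shift is inherited from that of the billiard map via the continuous equivariant surjection $c : X_P' \to \Sigma_P'$ of diagram (\ref{e1}): applying $c$ to the Petersen witnesses in $X_P$ produces Petersen witnesses in $\Sigma_P$ for every pair of cylinder sets, using that the preimage under $c$ of a cylinder is itself a basic open set of the form above. The principal obstacle is the density step: one must mesh together the two ingredients of Corollary \ref{c1}---a periodic orbit in $U$ to guarantee $f_P^n U \cap U \ne \emptyset$, and a minimal orbit on some $\mathcal{R}_\xi$ to guarantee $f_P^n U \cap V \ne \emptyset$---at a \emph{single} time $n$, by exploiting that the periodic return time $m$ still makes $f_P^m$ a minimal transformation of $\mathcal{R}_\xi$; the need to choose $\xi$ so that $\mathcal{R}_\xi$ simultaneously meets $U$ and $V$ is exactly where the large-$N_P$ hypothesis of Theorem \ref{t4} is invoked. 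This combination neatly sidesteps the unresolved question of topological weak mixing for the directional billiard maps themselves.
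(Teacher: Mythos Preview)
Your approach is essentially the paper's: both combine Banks' idea (periodic point supplies the ``return to $U$'' hit, total minimality of some $f^m|_{\mathcal{R}_\xi}$ supplies the ``reach $V$'' hit at a multiple of $m$) with the approximation technique, and both invoke the large-$N_P$ hypothesis exactly to make $\mathcal{R}_\xi$ meet every basic open set. Your use of Petersen's two-set criterion in place of the four-set (TM) condition is a mild simplification, but the mechanism is the same.

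There is, however, one genuine gap. You conclude that ``every $P \in G$ has topologically weakly mixing billiard map by Petersen's characterization,'' but Petersen's theorem is stated for a homeomorphism of a compact Hausdorff space, and the billiard map is neither continuous nor globally defined (corners are removed; preimages of corners are singular curves). The paper's definition of topological weak mixing for the billiard map is that $f\times f$ possesses an actual dense orbit in $X_P\times X_P$, and the passage from ``(TM)/Petersen holds on a basis'' to ``there exists a dense orbit'' is precisely where continuity and compactness are normally used. The paper confronts this explicitly: after establishing the open-set hitting condition for all basic $V,W$, it carries out a nested-compact-neighborhoods construction, at each stage shrinking to a compact $\overline{W_m}\subset W_{m-1}$ on which $(f\times f)^{n_m}$ is defined and continuous and lands in $V_m$; any point in $\bigcap_m \overline{W_m}$ then has an infinite forward orbit visiting every $V_m$. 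You need this (or an equivalent) argument for the billiard map. Your treatment of the billiard shift is fine, since $(\Sigma_P,\sigma)$ \emph{is} a genuine topological dynamical system and Petersen applies there directly.
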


The function $N_P$ was defined in Fact (i) of Subsection \ref{ss1}.
For example we could take $Z$ to be the set of right triangles. Theorem \ref{t1} follows immediately by taking $Z$ the set of
all $k$-gons.

We begin by a precision on the statement of the theorem,
it holds in both of the first two topologies, as well as for the third topology, but in this case
it is a statement (only) about generic polygons. There are no differences in the proof other than
the topology used.

Besides the method of approximation, the proof of the theorem uses some ideas  contained in the following theorem of Banks,
\emph{if $(X,f)$ is a totally transitive topological dynamical system and periodic points of $f$ are dense in $X$ then $(X,f)$ is topologically weakly mixing} (\cite{Ba} Theorem 1.1).

Fix a $k$-gon $P$ and the corresponding phase space $X_P$.
We (piecewise) affinely identify the each rectangle $X_i$ with the rectangle $S := (0,1) \times (0,\pi)$, i.e., we affinely normalize
the length of the $i$th side to be $1$.
For each $M \ge 1$ we consider the collection of  $1/M \times \pi/M$ rectangles contained in $S$ with
corners  at a point of the form  $(i/(2M),j\pi /(2M))$ for some integers $i,j$.
This collection is a finite open cover of $S$. It has cardinality $(2M-1)^2$. For each $M$ we denote this collection by $\U^M := \{U_i^M\}$. The affine identification then yields a finite open cover of $X_P$ of cardinality $k(2M-1)^2$.
The union of these covers
over $M \ge 1$ yields a countable basis $\U$ for the topology of $X_P$.
Since we are studying topological weak mixing we also define $\V^M := \{U_i \times U_j: U_i, U_j \in \U^M\}$. Then $\V := \cup_{M \ge 1} \V^M$ is a countable basis for the topology
of $X_P \times X_P$.
The phase spaces $X_P$  and $X_P \times X_P$ as well as these open covers
vary continuously as we vary $P$.

Now suppose that $P$ is rational. In this first step we use Fact (i).  Let $N = N_P$.
Then  for any $\xi$ not of the form $k\pi/N$, the invariant set $\mathcal{R}_\xi$ is at least $\pi/(2N)$ dense in the phase space $X_P$ since the $D_N$ orbit of
any $\theta$ is at least $\pi/(2N)$ dense in $\S$.
Thus we have
\begin{enumerate}[(i)]\addtocounter{enumi}{8}
\item if  $P$ is a rational polygon such that $N_P > M/2$, then for each $\xi$ not of the form $k\pi/N$  the invariant set $\mathcal{R}_\xi$ intersects all the sets  in $\U^M$.
\end{enumerate}

\begin{proof}[Proof of Theorem \ref{t4}]  
By our definition of the billiard map it is not defined when leaving or arriving at a corner.
For any set $A \subset X_P$ and any ${\ell} \in \mathbb{Z}$ let $A_{\ell}$ be the set of points in $A$ such that the map $f^{\ell}$ is defined.  
We will use the following short notation, we write $f^{\ell} A$ to stand for $f^{\ell} A_{\ell}$.
Note that the set of points in $X_P$ whose billiard flow orbit arrives at a corner  is a closed  smooth  curve in $X_P$,
thus the set of points for which $f$ is not defined is a finite union of closed smooth curves. The same is true for any iterate
$f^{\ell}$ of  of the billiard map (${\ell} \in \mathbb{Z} \setminus \{0\}$). We will repeatedly use the following implication of these facts: 
for any open set $U \subset X_P$ for each ${\ell} \in \mathbb{Z}$ the set  $f^{\ell}U$ is an open set, and furthermore if $U$ is non-empty
the $f^{\ell}U$ is non-empty. 
Furthermore, if $P$ is rational, and $U$ intersects a $\mathcal{R}_\xi$ then $U \cap \mathcal{R}_\xi$ is open in $\mathcal{R}_{\xi}$ and non-empty and thus for each ${\ell} \in \mathbb{Z}$ the set
$f^{{\ell}} U \cap \mathcal{R}_\xi$ is open in $\mathcal{R}_\xi$ and furthermore  non-empty since the set of points  in $\mathcal{R}_\xi$ for which the map $f^{\ell}$  is
not defined is a finite set of points.

Fix $k \ge 3$ and $M \ge 1$.
The first part of the proof resembles Petersen's characterization of weak mixing \cite{Pe} used in the proof of Banks theorem
\cite{Ba}.
Consider any rational $k$-gon $P$  in $Z$ such that $N_P = N > M/2$ and sets
$A,B,C,D \in \U^M$.  Consider any $\xi$ for which the map $(\mathcal{R}_{\xi},f)$ is (totally) transitive.
Using (ix), since $N > M/2$  each of the sets $A,B,C,D$ intersects $\mathcal{R}_\xi$ in an open set in $\mathcal{R}_\xi$,
thus there is a $\ell \ge 1$
such that  the open set   $E := f^\ell A \cap C$ is non-empty, and thus
$F := f^{-\ell} E \subset A \cap f^{-\ell} C$ is also open and non-empty (note that we do not necessarily have equality here since
$f^{-\ell} f^{\ell} E = E_\ell \subset E$, see the remark at the start of the proof).
By Corollary \ref{c1} periodic points are dense in $X_P$, thus there is a periodic point $x  \in F$.  Let $m$ be the period
of $x$.
For $j \ge 1$, one has $f^{jm +\ell}x = f^\ell x \in C$, and thus the open set $f^{jm+\ell} A \cap C$ is also non-empty.
The set $f^{-\ell} D$ is open and non-empty and intersects all $\mathcal{R}_\xi$. Thus we can choose a $\xi$ such that
 $(\mathcal{R}_\xi,f^{m})$ is transitive, since the open set $f^\ell B$ is non-empty this    implies that there is a $j \ge 1$ such that $f^{mj} (f^\ell B )\cap D \ne \emptyset$. Let $n = mj + \ell$,
 then we have shown that
 $(f\times f)^n (A \times B) \cap (C \times D)$ is a non-empty open set.

We have shown that  the billiard map in $P$ verifies  the condition for topological mixing for all sets $V,W \in \V^M$ (here $V := A \times B,  W := C \times D $).
Since the collection $\V^M$ is finite and the billiard map and the collection $\mathcal{V}^M$ vary continuously as we vary $P$
 we can find an $\e(P) > 0$ such that the billiard map verifies this condition for all $Q \in B(P,\e(P))$.

 Let $\mathcal{P}_N$ denote the set of rational polygons which satisfy $N_P = N$.
 Consider  $$\bigcup_{N > \frac{M}{2}} \bigcup_{P \in \mathcal{P}_N} B(P,\e(P)).$$
This is an open dense set of polygons verifying the characterization (TM)  for all
sets $V,W \in \V^M$.
Thus the set
$$G :=\bigcap_{M=1}^\infty \bigcup_{N > \frac{M}{2}} \bigcup_{P \in \mathcal{P}_N} B(P,\e(P))$$
is a dense $G_\delta$ set verifying the characterization (TM) for all $V,W$ in the basis $\mathcal{V}$.

Suppose $P \in G$. To see that
 that the billiard shift $(\Sigma_P,\sigma)$ is topologically weakly mixing
consider  a cylinder set $\A \subset \Sigma_P$.  Then since $\U$ is a basis for the topology of $X_P$
we can then find a (non-empty) $A \in \U$ such that  $A \subset \pi(\A) \cap X_P$.
Now repeat this for four cylinder sets $\A,\B,\D,\D$.  We have verified that there is an $n$ such that
$(f \times f)^n (A \times B) \cap (C \times D) \ne \emptyset$.  Then Equation  \eqref{e1} implies that
$(\sigma \times \sigma)^n (\A \times \B) \cap (\C \times \D) \ne \emptyset$. Thus by the characterization (TM)
the billiard shift $(\Sigma_P,\sigma)$ is topologically weakly mixing.

Now since the billiard map is not a topological dynamical system, we can not directly use the characterization (TM), but need to repeat
the ideas behind the proof of this characterization to show that it is   topological weakly mixing.

Consider $P \in G$ and $W,V \in \V$.
Since $P \in G$ there exists $n \in \N$ such that the open set $(f \times f)^n W \cap V$ is non-empty.
Thus we can choose
 an open set $W'$  whose closure $\overline{W'} \subset W$ such that $(f \times f)^{n}$ is defined (and thus continuous) for every point of
$\overline{W'}$ and  $(f \times f)^{n_1} (\overline{W'}) \subset V$.

Remember that  $\V$ is a countable basis for the topology of $X_P \times X_P$.
We need to find an infinite $f \times f$ orbit which visits each of the $V_{k}$. Fix  $W_0 \in \V$
such that $\overline{W_0} \subset X_P \times X_P$.
Apply the previous paragraph to  $W_0$ and $V_1$, this yields an $n_1 \in \N$ and a compact neighborhood
$\overline{W_1} \subset W_0$ such that $(f \times f)^{n_1}$ is defined and continuous on
$\overline{W_1}$ and $(f \times f)^{n_1} \overline{W_1} \subset V_1$.

Now inductively suppose that for $\ell \in \{1,\dots, m\}$ we have defined a compact neighborhoods $\overline{W_\ell} \subset W_{\ell -1}$, and positive integer $n_\ell$
such that for each $\ell \in \{ 1,2,\dots,m\}$ the map
$(f \times f)^{n_\ell}$ is defined and continuous on
$\overline{W_\ell}$ and $(f \times f)^{n_\ell} \overline{W_\ell} \subset V_\ell$.

Now we again  apply the above construction to $W_m$ and $V_{m+1}$ to find
 an $n_{m+1} \in \N$ and a compact neighborhood
$\overline{W_{m+1}} \subset W_m$ such that $(f \times f)^{n_{m+1}}$ is defined and continuous on
$\overline{W_{m+1}}$ and $(f \times f)^{n_{m+1}} \overline{W_{m+1}} \subset V_{m+1}$.

Every point in $\cap_{m \ge 0} \overline{W_m}$ will have an  infinite $(f \times f)$ forward orbit which visits all the sets $V_\ell$, and
thus the forward orbit is dense in $X_P \times X_P$.
This finishes the proof of Theorem \ref{t4} and thus also of Theorem \ref{t1}.
\end{proof}

\section{Open problems}

It would be interesting to prove that the billiard flow in a typical polygon is topologically weakly mixing. One could
envisage a proof similar to the one we have given for the billiard map, but one would need to know something about the transitivity of time $t$ map for a dense
set of invariant surfaces in a rational polygon for all times $t$ which are periods of the billiard flow.

The aforementioned result of Chaika and Fickensher indicates that it could be possible that the billiard map and or flow is
topologcally mixing in some irrational polygons.

The measure theoretic mixing properties stronger than ergodicity and weaker than $K$-mixing (for example weak mixing, strong mixing, multiple mixing) for the billiard map and or flow in any irrational polygon is a completely open question.

\newpage


\begin{thebibliography}{99}

\bibitem[AvDe]{AvDe} A.\ \'Avila and V. Delecroix,
\emph{Weak mixing directions in non-arithmetic Veech surfaces}
Journal of the American Mathematical Society 29 (2016) 1167--1208.

\bibitem[AvFo]{AvFo}  A.\ \'Avila and G.\  Forni,	 \emph{Weak mixing for interval exchange transformations and translation flows}
Annals  of Mathematics \ 165 (2007) 637--664.

\bibitem[Ba]{Ba} J.\ Banks, \emph{Regular periodic decompositions for topologically transitive maps} Ergodic Theory and  Dynamical  Systems
17 (1997) 505--529.

\bibitem[BoGaKrTr]{BoGaKrTr} M.\ Boshernitzan, G.\ Galperin, T.\ Krüger, and S.\ Troubetzkoy,  \emph{Periodic billiard orbits are dense in rational polygons} Transactions of the American Mathematical  Society 350 (1998), no. 9, 3523--3535.

\bibitem[ChFi]{ChFi} J.\ Chaika and J.\  Fickenscher, \emph{Topological mixing for some residual sets of interval exchange
transformations} Communications in  Mathematical  Physics  333 (2015) 483--503.

\bibitem[GaKrTr]{GaKrTr} G.\ Galperin, T.\ Krüger, and S.\ Troubetzkoy, \emph{Local instability of orbits in polygonal and polyhedral billiards} Communications in  Mathematical  Physics 169 (1995) 463--473.

\bibitem[GuHa]{GuHa} E.\ Gutkin and N.\ Haydn, \emph{Topological entropy of generalized polygon exchanges}
Bulletin of the American Mathematical  Society 32 (1995) 50--56.

\bibitem[Ka]{Ka} A.\ Katok, \emph{Interval exchange transformations and some special flows are not mixing}  Israel Journal of Mathematics 35 (1980) 301--310.

\bibitem[Ka1]{Ka1} A.\ Katok, \emph{The growth rate for the number of singular and periodic orbits  for  a polygonal billiard}   Communications  in Mathematical  Physics (1987)  151--160.

\bibitem[Ka2]{Ka2} A.\ Katok, \emph{Ergodicity of generic irrational billiards} p. 13 in Abstracts of Workshop on two-manifolds and geometry, MSRI Publ.\ vol. 1, 1986.

\bibitem[KaZe]{KaZe} A.\ Katok and A.\ Zemlyakov, \emph{Topological transitivity of billiards in polygons}
Mathematical Notes 18 (1975) 760--764.

\bibitem[Ke]{Ke} M.\ Keane, \emph{Interval exchange transformations}
Mathematische Zeitschrift 141 (1975) 25--32.

\bibitem[KeMaSm]{KeMaSm} S.\ Kerckhoff, H.\ Masur and J.\  Smillie,
\emph{Ergodicity of billiard flows and quadratic differentials}
Annals  of Mathematics  124 (1986)  293--311.

\bibitem[KeRo]{KeRo} H.\ Keynes and J.\ Robertson,
\emph{Eigenvalue theorems in topological transformation groups},
Transactions of the American Mathematical Society 139 (1969) 359--369.

\bibitem[Le]{Le} N.\ J.\ Lennes,
\emph{On the Motion of a Ball on a Billiard Table}
The American Mathematical Monthly 12 (1905) 152--155.

\bibitem[MSTr]{MSTr} A.\ Malaga Sabogal and S.\ Troubetzkoy,
\emph{Weakly-mixing polygonal billiards}
Bulletin of the  London Mathematical Society (2016).

\bibitem[MaTa]{MaTa} H.\ Masur and S.\ Tabachnikov, {\em Rational
Billiards and Flat Surfaces}, in Handbook of Dynamical Systems, Vol.
1A, Elsevier Science B.V., 2002, edited by B.\ Hasselblatt and A.\
Katok.

\bibitem[Pe]{Pe} K.\ Petersen, \emph{Disjointness and weak mixing of minimal sets}, Proceedings of the American Mathematical Society 24 (1970) 278--280.

\bibitem[PoSt]{PoSt} F.\ Pospiech and A.\ Stepin, \emph{\"Uber  die  vollst\"andige  Nichtintegrierbarkeit
eines Systems zweier Teilchen  auf einem Intervall} Wisenschaftliche Zeitschrift der Martin-Luther-Universit\"at Halle-Wittenberg
Mathematisch-naturwissenschaftliche Reihe
37 (1988) 89--98.

\bibitem[Sc]{Sc} D.\  Scheglov,
\emph{Growth of periodic orbits and generalized diagonals for typical triangular billiards}
Journal Modern Dynamics 7 (2013) 31--44.

\bibitem[Tr]{Tr} S.\ Troubetzkoy,
\emph{Recurrence and periodic billiard orbits in polygons},
Regular and  Chaotic Dynamics 9 (2004), no. 1, 1--12.

\end{thebibliography}
\end{document}